\theoremstyle{definition}
\newtheorem{theorem}{Theorem}[section]
\newtheorem{definition}{Definition}[section]
\newcommand{\reals}{\mathbf{R}}
\newcommand{\bigO}[1]{\mathcal{O}\left(#1\right)}
\newcommand{\eps}{\varepsilon}
\def\ps@pprintTitle{%
  \let\@oddhead\@empty
  \let\@evenhead\@empty
  \def\@oddfoot{\footnotesize\itshape
    Published in Computer Aided Geometric Design (\cite{Hermes2019})
      \hfill November 8, 2019}%
  \let\@evenfoot\@oddfoot}
\begin{document}
\hypersetup{
  urlcolor=MidnightBlue,
  linkcolor=MidnightBlue,
  citecolor=ForestGreen,
}

\begin{frontmatter}

\title{A 2-Norm Condition Number for B\'{e}zier Curve Intersection}
\author[djh]{Danny Hermes}\ead{dhermes@berkeley.edu}
\address[djh]{UC Berkeley, 970 Evans Hall \#3840, Berkeley, CA 94720-3840 USA}

\begin{abstract}
We present a condition number
of the intersection of two B\'{e}zier curves.
\end{abstract}

\begin{keyword}
B\'{e}zier curve \sep Curve intersection \sep Condition number \sep
Numerical analysis
\end{keyword}

\end{frontmatter}

\section{Introduction}

The problem of intersecting two planar B\'{e}zier curves is an important
one in Computer Aided Geometric Design (CAGD).
Many intersection algorithms have been described in the literature, both
geometric (\cite{Sederberg1986, Sederberg1990, Kim1998}) and algebraic
(\cite{Manocha:CSD-92-698}). Though the general convergence properties
of these algorithms have been studied (e.g. \cite{Schulz2009}),
no condition number for the intersection problem has been described in
the CAGD literature\footnote{As far as the author has been able to tell.
In many CAGD textbooks, there is a long review of methods for intersecting
two planar B\'{e}zier curves (e.g. \cite{Farin2001, SederbergNotes}) but no
mention of conditioning.}.

There are more generic condition numbers for rational polynomial systems
(\cite{Herman2015}) or nonlinear algebraic systems
(\cite[Chapter~25, Section 25.4]{Higham2002}). However, the condition
numbers with an algebraic focus (rather than an analytic one) often
require too much computation to be useful. The numerical analytic condition
numbers are in some ways too general to be useful for planar B\'{e}zier
curve intersection.

In this paper, we describe a simple relative root condition number for this
intersection problem. Since tangent intersections are to transversal
intersections as multiple roots are to simple roots of a function, this
condition number is infinite for non-transversal intersections.
We present a few examples verifying that the condition number increases as
a family of intersections approach an ill-behaved intersection.

\section{Preliminaries}

Throughout, we will refer to a parametric polynomial
plane curve given by
\begin{equation}
b(s) = \sum_{j = 0}^n \bm{b}_j B_{j, n}(s)
\end{equation}
as a \emph{B\'{e}zier curve},
where \(B_{j, n}(s) = \binom{n}{j} (1 - s)^{n - j} s^j\) is a Bernstein
polynomial.
When the parameter \(s \in \left[0, 1\right]\), the coefficients
\(B_{j, n}(s) \in \left[0, 1\right]\) as well and the evaluation is a convex
combination of the \emph{control points} \(\bm{b}_j \in \reals^2\).

An intersection of two planar curves \(b_0(s)\) and \(b_1(t)\) corresponds to
a root \(\left[\begin{array}{c}
\alpha \\ \beta \end{array}\right]\) of the function
\begin{equation}
F(s, t) = b_0(s) - b_1(t).
\end{equation}
Note that
\(F: \reals^2 \longrightarrow \reals^2\).

Each component \(x(s)\) and \(y(s)\) of a B\'{e}zier curve is a
polynomial in Bernstein form. For such a polynomial
\begin{equation}
p(s) = \sum_{j = 0}^n p_j B_{j, n}(s)
\end{equation}
the (absolute) condition number of evaluation is
\begin{equation}\label{eq:p-tilde}
\widetilde{p}(s) = \sum_{j = 0}^n \left|p_j\right| B_{j, n}(s)
\end{equation}
(\cite{Farouki1987}) when the parameter \(s\) is in the unit interval.

\section{Conditioning of Generic Root-finding}\label{sec:generic}

Consider a smooth function \(F: \reals^n \longrightarrow \reals^n\)
with Jacobian \(F_{\bm{x}} = J\). We want to consider a special class of
functions of the form \(F\left(\bm{x}\right) = \sum_j c_j
\phi_j\left(\bm{x}\right)\) where the basis
functions \(\phi_j\) are also smooth functions
\(\reals^n \longrightarrow \reals^n\)
and each \(c_j \in \reals\). We want to consider the effects on a root
\(\bm{\alpha} \in \reals^n\) of a perturbation in one of the
coefficients \(c_j\). We examine the perturbed function
\begin{equation}
G\left(\bm{x}, \delta\right) = F\left(\bm{x}\right) +
\delta \phi_j\left(\bm{x}\right).
\end{equation}
Since \(G\left(\bm{\alpha}, 0\right) = \bm{0}\), if \(J^{-1}\) exists at
\(\bm{x} = \bm{\alpha}\),
the implicit function theorem tells us that we can define
\(\bm{x}\) via
\begin{equation}
G\left(\bm{x}\left(\delta\right), \delta\right) = \bm{0}.
\end{equation}
Taking the derivative with respect to \(\delta\) we find that
\(\bm{0} = G_{\bm{x}} \bm{x}' + G_{\delta}\). Plugging in
\(\delta = 0\) we find that \(\bm{0} = J\left(\bm{\alpha}\right) \bm{x}' +
\phi_j\left(\bm{\alpha}\right)\), hence we
conclude that
\begin{equation}
\bm{x}\left(\delta\right) = \bm{\alpha} - J^{-1}\left(\bm{\alpha}\right)
  \phi_j\left(\bm{\alpha}\right) \delta + \bigO{\delta^2}.
\end{equation}
This gives
\begin{equation}
\frac{\left \lVert J^{-1}\left(\bm{\alpha}\right)
  \phi_j\left(\bm{\alpha}\right) \right \rVert}{
  \left \lVert \bm{\alpha} \right \rVert}.
\end{equation}
as the relative condition number for a perturbation in \(c_j\).
By considering perturbations in \emph{all} of the coefficients:
\(\left|\delta_j\right| \leq \eps \left|c_j\right|\), a similar analysis
gives a root function
\begin{equation}
\bm{x}\left(\delta_0, \ldots, \delta_n\right) = \bm{\alpha} -
  J^{-1}\left(\bm{\alpha}\right) \sum_{j = 0}^n \delta_j
  \phi_j\left(\bm{\alpha}\right) + \bigO{\eps^2}.
\end{equation}
With this, we can define a root condition number

\begin{definition}\label{defn:abstract-cond-num}
For a smooth function \(F\left(\bm{x}\right) = \sum_j c_j
\phi_j\left(\bm{x}\right)\) parameterized by the coefficients
\(c_j\) with root \(\bm{\alpha}\) and Jacobian
\(J\left(\bm{\alpha}\right)\), we define a relative root condition
number
\begin{equation}
\kappa_{\bm{\alpha}} =
  \limsup_{\eps \to 0} \frac{\left \lVert\delta \bm{\alpha}
  \right \rVert / \eps}{\left \lVert\bm{\alpha}\right \rVert} =
  \lim_{\eps \to 0} \left(\sup_{\left|\delta_j\right| \leq
  \eps \left|c_j\right|} \frac{\left \lVert
  J^{-1}\left(\bm{\alpha}\right) \sum_j \delta_j
  \phi_j\left(\bm{\alpha}\right) \right \rVert / \eps}{
  \left \lVert\bm{\alpha}\right \rVert}\right)
\end{equation}
where \(\bm{\alpha} + \delta \bm{\alpha}\) is a root of the perturbed
function \(\sum_j (c_j + \delta_j) \phi_j\left(\bm{x}\right)\).
\end{definition}

In \cite[Chapter~25, Section 25.4]{Higham2002} a similar definition is
given. Instead of bounding the perturbations component-wise, it bounds
the entire perturbation vector \(\bm{\delta}\)
\begin{equation}
  \lim_{\eps \to 0} \left(\sup_{\left \lVert \bm{\delta} \right \rVert \leq
  \eps \left \lVert \bm{c} \right \rVert} \frac{\left \lVert
  J^{-1}\left(\bm{\alpha}\right) \sum_j \delta_j
  \phi_j\left(\bm{\alpha}\right) \right \rVert / \eps}{
  \left \lVert \bm{\alpha} \right \rVert}\right).
\end{equation}
It is possible to rewrite \(\sum_j \delta_j \phi_j\left(\bm{\alpha}\right) =
F_{\bm{c}} \bm{\delta}\) where \(F_{\bm{c}} =
\left[\frac{\partial F_i}{\partial c_j}\right]_{ij} =
\left[\begin{array}{c c c} \phi_0\left(\bm{\alpha}\right) & \cdots &
\phi_n\left(\bm{\alpha}\right) \end{array}\right]\) is the Jacobian of \(F\)
with respect to the coefficients \(\bm{c}\). With this representation, the
Higham condition number has a closed form since
\begin{equation}
  \left \lVert
  J^{-1} F_{\bm{c}} \bm{\delta} \right \rVert / \eps \leq
  \left \lVert J^{-1} F_{\bm{c}} \right \rVert \left \lVert \bm{\delta} \right
  \rVert / \eps \leq \left \lVert J^{-1} F_{\bm{c}} \right \rVert \left \lVert
  \bm{c} \right \rVert
\end{equation}
for any matrix norm that is compatible with the vector norm used on
\(\bm{\delta}\). The Frobenius matrix norm and vector 2-norm can be
combined to give a condition number that is straightforward to compute:
\begin{equation}\label{eq:higham-cond-num}
  \kappa_H = \left \lVert J^{-1} F_{\bm{c}} \right \rVert_F
  \frac{\left \lVert \bm{c}
  \right \rVert_2}{\left \lVert \bm{\alpha} \right \rVert_2}.
\end{equation}

While this closed form for \(\kappa_H\) is useful, it provides a less sharp
measure than the condition number \(\kappa_{\bm{\alpha}}\) given in
Definition~\ref{defn:abstract-cond-num} since the ball \(\left \lVert
\bm{\delta} \right \rVert_2 \leq \eps \left \lVert \bm{c} \right \rVert_2\)
can allow larger perturbations of a single coefficient than the box determined
by \(\left|\delta_j\right| \leq \eps \left|c_j\right|\) and allows
perturbations in zero coefficients.
When specialized to planar B\'{e}zier curves, we'll show in
Theorem~\ref{thm:kappa-closed-form} that \(\kappa_{\alpha}\) has a closed form
as well\footnote{When the 2-norm is used}. In addition, this closed
form shows that \(\kappa_{\alpha}\) is a natural extension of the
one-dimensional equivalent given in~\eqref{eq:bernstein-cond-num} below.

\subsection{One-dimensional Case}

When \(n = 1\), due to the triangle inequality:
\begin{equation}\label{eq:tri-ineq}
\left|\delta \alpha\right| = \left|J^{-1} \sum_{j = 0}^n
  \delta_j \phi_j(\alpha)\right| \leq \frac{1}{\left|F'(\alpha)\right|}
  \sum_{j = 0}^n \left|\delta_j \phi_j(\alpha)\right|.
\end{equation}
The sign and magnitude of each \(\delta_j\) can be chosen to make
\(\delta_j \phi_j(\alpha) = \left|c_j \phi_j(\alpha)\right| \eps\),
hence for these values equality holds in the triangle inequality:
\begin{equation}
\left|\sum_{j = 0}^n \delta_j \phi_j(\alpha)\right| =
\eps \sum_{j = 0}^n \left|c_j \phi_j(\alpha)\right|.
\end{equation}
Thus we get a root condition number for a polynomial given in Bernstein form
\begin{equation}\label{eq:bernstein-cond-num}
\kappa_{\alpha} =
  \frac{1}{\left|\alpha F'(\alpha)\right|} \sum_{j = 0}^n \left|
  c_j \phi_j(\alpha)\right| =
  \frac{\widetilde{F}(\alpha)}{\left|\alpha F'(\alpha)\right|}
\end{equation}
that agrees with the common definition
(\cite[Equation~12.33]{Farouki2008}) for any polynomial basis
\(\phi_j\).

\section{Conditioning of B\'{e}zier Curve Intersection}

To define a condition number for the intersection of two planar B\'{e}zier
curves, we write the difference as
\begin{equation}
F(s, t) = \left[ \begin{array}{c} x_0(s) \\ 0 \end{array}\right] +
  \left[ \begin{array}{c} 0 \\ y_0(s) \end{array}\right] -
  \left[ \begin{array}{c} x_1(t) \\ 0 \end{array}\right] -
  \left[ \begin{array}{c} 0 \\ y_1(t) \end{array}\right].
\end{equation}
We can show that there is a closed form for the condition number given
by Definition~\ref{defn:abstract-cond-num}, specialized to the
2-norm.

\begin{theorem}\label{thm:kappa-closed-form}
Let \(\left[\begin{array}{c} \alpha \\ \beta \end{array}\right]\) be the
parameter vector \(\left[\begin{array}{c} s \\ t \end{array}\right]\)
at which two planar B\'{e}zier curves \(b_0(s)\) and \(b_1(t)\)
have a transversal intersection.
Then the root condition number of the intersection is
\begin{equation}\label{eq:kappa-max}
\kappa_{\alpha, \beta} = \sqrt{\frac{\mu_1^2
  \left(\bm{v} \cdot \bm{v}\right) +
  2 \mu_1 \mu_2 \left|\bm{v} \cdot \bm{w}\right| +
  \mu_2^2 \left(\bm{w} \cdot \bm{w}\right)}{\alpha^2 + \beta^2}}
\end{equation}
where
\begin{equation}
  J^{-1}\left(\alpha, \beta\right) = \left[ \begin{array}{c c}
  \bm{v} & \bm{w} \end{array}\right] \qquad
  \mu_1 = \widetilde{x}_0(\alpha) + \widetilde{x}_1(\beta) \qquad
  \mu_2 = \widetilde{y}_0(\alpha) + \widetilde{y}_1(\beta).
\end{equation}
Here \(b_0(s) = \left[\begin{array}{c c} x_0(s) & y_0(s)\end{array}\right]^T\),
\(b_1(t) = \left[\begin{array}{c c} x_1(t) & y_1(t)\end{array}\right]^T\)
and the \(\widetilde{x}_i, \widetilde{y}_j\) are as defined in
\eqref{eq:p-tilde}.
\end{theorem}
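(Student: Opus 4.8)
The plan is to unfold Definition~\ref{defn:abstract-cond-num} for the specific \(F(s,t) = b_0(s) - b_1(t)\), exploiting the block structure in which the \(x\)- and \(y\)-coefficient perturbations act on independent components of the perturbation vector, and then to carry out the supremum as a quadratic maximization over a box. First I would list the coefficients \(c_j\) and basis functions \(\phi_j\). Writing \(x_0(s) = \sum_j (x_0)_j B_{j, n}(s)\) and similarly for \(y_0, x_1, y_1\), the coefficient set splits into four disjoint groups, and each basis function is a vector with a single nonzero slot: the \(x_0\)- and \(x_1\)-coefficients multiply \(\begin{bmatrix} B_{j, n}(s) \\ 0 \end{bmatrix}\) and \(\begin{bmatrix} -B_{j, m}(t) \\ 0 \end{bmatrix}\), while the \(y_0\)- and \(y_1\)-coefficients multiply the analogous vectors in the second slot. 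Consequently the perturbation evaluated at the root factors as
\begin{equation}
\sum_j \delta_j \phi_j(\alpha, \beta) = \begin{bmatrix} P \\ Q \end{bmatrix},
\end{equation}
where \(P\) collects only the \(x\)-coefficient perturbations and \(Q\) only the \(y\)-coefficient perturbations.

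Second, I would characterize the set of attainable \((P, Q)\). Because the Bernstein polynomials are nonnegative on \([0, 1]\) and the sign of each \(\delta_j\) is free, the constraint \(\left|\delta_j\right| \leq \eps\left|c_j\right|\) lets \(P\) attain any value in \([-\eps\mu_1, \eps\mu_1]\): choosing \(\delta_j = \pm \eps\left|c_j\right|\) so that every term adds constructively collapses the sum to \(\eps\left(\widetilde{x}_0(\alpha) + \widetilde{x}_1(\beta)\right) = \eps\mu_1\), and flipping all signs gives \(-\eps\mu_1\). The same reasoning bounds \(Q\) by \(\eps\mu_2\). Since the two groups of coefficients are disjoint, \(P\) and \(Q\) vary independently, so the attainable set is exactly the rectangle \([-\eps\mu_1, \eps\mu_1] \times [-\eps\mu_2, \eps\mu_2]\).

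Third, transversality makes \(J\) invertible, so writing \(J^{-1} = \left[\begin{array}{c c} \bm{v} & \bm{w} \end{array}\right]\) gives \(J^{-1} \left[\begin{array}{c} P \\ Q \end{array}\right] = P\bm{v} + Q\bm{w}\), and the supremum becomes the maximization of the quadratic form
\begin{equation}
\left\lVert P\bm{v} + Q\bm{w} \right\rVert_2^2 = P^2 \left(\bm{v} \cdot \bm{v}\right) + 2 PQ \left(\bm{v} \cdot \bm{w}\right) + Q^2 \left(\bm{w} \cdot \bm{w}\right)
\end{equation}
over that rectangle. This form is convex, since its Hessian is twice the Gram matrix of \(\bm{v}\) and \(\bm{w}\) and hence positive semidefinite, so the maximum is attained at a vertex \((\pm\eps\mu_1, \pm\eps\mu_2)\). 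Comparing the two inequivalent sign patterns selects the one with \(+2\mu_1\mu_2\left|\bm{v} \cdot \bm{w}\right|\). Dividing by \(\eps\sqrt{\alpha^2 + \beta^2}\) cancels the \(\eps\), and the resulting expression is already independent of \(\eps\), so the limit yields~\eqref{eq:kappa-max}.

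The main obstacle I anticipate is the independence argument in the second step: one must verify both that the \(x\)- and \(y\)-perturbations decouple (immediate from the disjoint coefficient groups) and that the box constraint on the \(\delta_j\) genuinely realizes the full rectangle in \((P, Q)\), which is exactly where nonnegativity of the Bernstein basis on \([0, 1]\) is essential. The quadratic maximization is then routine once convexity is noted, but the appearance of the absolute value \(\left|\bm{v} \cdot \bm{w}\right|\), rather than the signed dot product, is precisely the consequence of maximizing over all four corners.
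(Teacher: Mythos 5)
Your proposal is correct and follows essentially the same route as the paper: unfold Definition~\ref{defn:abstract-cond-num} for the four-group basis~\eqref{eq:bezier-full-basis}, observe that the \(x\)- and \(y\)-coefficient perturbations decouple into independent scalars bounded by \(\eps\mu_1\) and \(\eps\mu_2\) (with the bounds attainable by nonnegativity of the Bernstein basis on \(\left[0,1\right]\)), and then maximize \(\left\lVert P\bm{v} + Q\bm{w} \right\rVert_2^2\) over the resulting rectangle. The only difference is in the final maximization step, and it is cosmetic: where you invoke convexity of the quadratic form to place the maximum at a vertex, the paper argues explicitly that the unique interior critical point is the global minimum (using Cauchy--Schwarz and invertibility of \(J\)) and that the restriction to each edge is an up-opening parabola, arriving at the same four corners and the same selection of the sign pattern giving \(\left|\bm{v} \cdot \bm{w}\right|\).
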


\begin{proof}
Let the curve \(b_0(s)\) be of degree \(m\) and \(b_1(t)\) be of degree \(n\).
Then \(F(s, t)\) can be written as a sum of \(2(m + 1) + 2(n + 1)\) terms:
\begin{equation}\label{eq:bezier-full-basis}
F(s, t) =
  \sum_{i = 0}^m c_i^{(1)} \left[ \begin{array}{c}
  B_{i, m}(s) \\ 0 \end{array}\right] +
  \sum_{i = 0}^m c_i^{(2)} \left[ \begin{array}{c}
  0 \\ B_{i, m}(s) \end{array}\right] +
  \sum_{j = 0}^n c_j^{(3)} \left[ \begin{array}{c}
  -B_{j, n}(t) \\ 0 \end{array}\right] +
  \sum_{j = 0}^n c_j^{(4)} \left[ \begin{array}{c}
  0 \\ -B_{j, n}(t) \end{array}\right].
\end{equation}
Since \(F(s, t) = b_0(s) - b_1(t)\) we have Jacobian \(J(s, t) =
\left[ \begin{array}{c c} b_0'(s) & -b_1'(t) \end{array}\right]\). Since we
are considering a transversal intersection, we have
\(\det J(\alpha, \beta) \neq 0\). In a perturbed \(F\), we replace each
\(c_j^{(k)}\) with \(c_j^{(k)} + \delta_j^{(k)}\) where
\(\left|\delta_j^{(k)}\right| \leq \eps \left|c_j^{(k)}\right|\).
By writing \(J^{-1} = \left[ \begin{array}{c c}
\bm{v} & \bm{w} \end{array}\right]\), we have
\begin{multline}
J^{-1}\left(\bm{\alpha}\right) \sum_j \delta_j
  \phi_j\left(\bm{\alpha}\right) =
  \left[\sum_{i = 0}^m \delta_i^{(1)} B_{i, m}(\alpha) +
  \sum_{j = 0}^n \delta_j^{(3)} B_{j, n}(\beta)\right] \bm{v} \\
  + \left[\sum_{i = 0}^m \delta_i^{(2)} B_{i, m}(\alpha) +
  \sum_{j = 0}^n \delta_j^{(4)} B_{j, n}(\beta)\right] \bm{w} =
  \nu_1 \bm{v} + \nu_2 \bm{w}
\end{multline}
where
\begin{align}
\left|\nu_1\right| / \eps &\leq \sum_{i = 0}^m
  \left|c_{i}^{(1)}\right| B_{i, m}\left(\alpha\right) + \sum_{j = 0}^n
  \left|c_{j}^{(3)}\right| B_{j, n}\left(\beta\right) =
  \widetilde{x}_0(\alpha) + \widetilde{x}_1(\beta) = \mu_1 \\
\left|\nu_2\right| / \eps &\leq \sum_{i = 0}^m
  \left|c_{i}^{(2)}\right| B_{i, m}\left(\alpha\right) + \sum_{j = 0}^n
  \left|c_{j}^{(4)}\right| B_{j, n}\left(\beta\right) =
  \widetilde{y}_0(\alpha) + \widetilde{y}_1(\beta) = \mu_2.
\end{align}
As in \eqref{eq:tri-ineq}, the bound can be attained by choosing the
sign and magnitude of each perturbation so that
\(\delta_j^{(k)} B_{j, d} = \eps \left|c_j^{(k)}\right| B_{j, d}\).
The factor \(\eps\) can be cancelled to give the relative root
condition number
\begin{align}
\kappa_{\alpha, \beta} &= \frac{1}{\sqrt{\alpha^2 + \beta^2}}
  \sup_{\left|\nu_k\right| \leq \mu_k} \left \lVert \nu_1 \bm{v} +
  \nu_2 \bm{w} \right \rVert_2 \\
  &=
  \sqrt{\frac{\sup_{\left|\nu_k\right| \leq \mu_k}
  \nu_1^2 \left(\bm{v} \cdot \bm{v}\right) +
  2 \nu_1 \nu_2 \left(\bm{v} \cdot \bm{w}\right) +
  \nu_2^2 \left(\bm{w} \cdot \bm{w}\right)}{\alpha^2 + \beta^2}}
  \label{eq:intersect-cond-num}.
\end{align}
Now we seek to maximize the objective function \(\theta(\nu_1, \nu_2) =
\nu_1^2 \left(\bm{v} \cdot \bm{v}\right) +
2 \nu_1 \nu_2 \left(\bm{v} \cdot \bm{w}\right) +
\nu_2^2 \left(\bm{w} \cdot \bm{w}\right)\) in the rectangle
\(\left[-\mu_1, \mu_1\right] \times \left[-\mu_2, \mu_2\right]\).

To find interior critical points, we solve the system \(\theta_{\nu_1} =
\theta_{\nu_2} = 0\):
\begin{equation}
\left[ \begin{array}{c c}
  2 \bm{v} \cdot \bm{v} & 2 \bm{v} \cdot \bm{w} \\
  2 \bm{v} \cdot \bm{w} & 2 \bm{w} \cdot \bm{w} \end{array}\right]
\left[ \begin{array}{c} \nu_1 \\ \nu_2 \end{array}\right] =
\left[ \begin{array}{c} 0 \\ 0 \end{array}\right].
\end{equation}
This system has the unique solution \(\nu_1 = \nu_2 = 0\) unless
\(\|\bm{v}\|_2 \|\bm{w}\|_2 = \left|\bm{v} \cdot \bm{w}\right|\).
By the Cauchy-Schwarz inequality, this can only occur if \(\bm{v}\) and
\(\bm{w}\) are parallel; since \(J^{-1}\) is invertible, we know they
are not. Hence \(\theta(0, 0) = 0\) is the only interior critical point and
it is the global minimum.

Along the boundary of the rectangle,
we fix one of \(\nu_1\) or \(\nu_2\) and the resulting univariate function is
an up-opening parabola. For example, fixing \(\nu_2 = c\) gives
\(\theta(\nu_1, c) =
\nu_1^2 \left(\bm{v} \cdot \bm{v}\right) +
\nu_1\left[2 c \left(\bm{v} \cdot \bm{w}\right)\right] +
c^2 \left(\bm{w} \cdot \bm{w}\right)\) which has positive lead coefficient
\(\|\bm{v}\|_2^2\). The lead coefficient cannot be \(0\) since if \(\bm{v}\)
were the zero vector we would have \(\det J = 0\).
Since \(\theta\) is an up-opening parabola along the boundary, any critical
point must be a local minimum.

Thus we know the maximum occurs at one of the four corners of the
rectangle. Due to sign cancellation, this leads to one of two values
\(\theta =
\mu_1^2 \left(\bm{v} \cdot \bm{v}\right) \pm
2 \mu_1 \mu_2 \left(\bm{v} \cdot \bm{w}\right) +
\mu_2^2 \left(\bm{w} \cdot \bm{w}\right)\), the largest of which is
\(\mu_1^2 \left(\bm{v} \cdot \bm{v}\right) +
2 \mu_1 \mu_2 \left|\bm{v} \cdot \bm{w}\right| +
\mu_2^2 \left(\bm{w} \cdot \bm{w}\right)\). Thus
\begin{equation}\label{eq:intersect-cond-num-too}
\kappa_{\alpha, \beta} = \sqrt{\frac{\mu_1^2
  \left(\bm{v} \cdot \bm{v}\right) +
  2 \mu_1 \mu_2 \left|\bm{v} \cdot \bm{w}\right| +
  \mu_2^2 \left(\bm{w} \cdot \bm{w}\right)}{\alpha^2 + \beta^2}}
\end{equation}
as desired.
\end{proof}

With this closed form \(\kappa_{\alpha, \beta}\) in hand, we can now compare
to the Higham condition number \(\kappa_H\) from~\eqref{eq:higham-cond-num}.
We'll show that \(\kappa_{\alpha, \beta} \leq \kappa_H\) by comparing
\(\kappa_{\alpha, \beta}^2 \left(\alpha^2 + \beta^2\right)\)
to \(\kappa_H^2 \left(\alpha^2 + \beta^2\right) = \left \lVert J^{-1}
F_{\bm{c}} \right \rVert_F^2 \left \lVert \bm{c} \right \rVert_2^2\).
In the case of planar curves with basis as in~\eqref{eq:bezier-full-basis},
\begin{equation}
  \left \lVert J^{-1} F_{\bm{c}} \right \rVert_F^2 =
  \left[\left \lVert \bm{v} \right \rVert_2^2 + \left \lVert \bm{w}
  \right \rVert_2^2 \right] W \quad \text{where} \quad W =
  \sum_{i = 0}^m B_{i, m}^2(\alpha) + \sum_{j = 0}^n B_{j, n}^2(\beta)
\end{equation}
is the sum of squared Bernstein weights.
With two applications of the Cauchy-Schwarz inequality we know that
\begin{equation}
  \kappa_{\alpha, \beta}^2 \left(\alpha^2 + \beta^2\right) \leq
  \left(\mu_1 \left \lVert \bm{v} \right \rVert_2 + \mu_2 \left \lVert \bm{w}
  \right \rVert_2\right)^2 \leq \left(\mu_1^2 + \mu_2^2\right) \left[
  \left \lVert \bm{v} \right \rVert_2^2 + \left \lVert \bm{w}
  \right \rVert_2^2 \right].
\end{equation}
So it remains to show that \(\mu_1^2 + \mu_2^2 \leq W \left \lVert \bm{c}
\right \rVert_2^2\), which can be done with another application of
Cauchy-Schwarz to the terms in \(\mu_1\) and \(\mu_2\):
\begin{equation}
  \mu_1^2 + \mu_2^2 \leq \left(\sum_{i = 0}^m \left|c_i^{(1)}\right|^2 +
  \sum_{j = 0}^n \left|c_j^{(3)}\right|^2 \right) W + \left(\sum_{i = 0}^m
  \left|c_i^{(2)}\right|^2 + \sum_{j = 0}^n \left|c_j^{(4)}\right|^2 \right) W
  = W \left \lVert \bm{c} \right \rVert_2^2.
\end{equation}

\section{Condition Number in Practice}

\subsection{Transversal Intersection}

Consider the line
\(b_0(s) = \left[ \begin{array}{c} 2s \\ 2s \end{array}\right]\)
and quadratically parameterized line
\(b_1(t) = \left[ \begin{array}{c} 4t^2 \\ 2 - 4t^2
\end{array}\right]\) which intersect at \(\alpha = \beta = 1/2\).
At the intersection we have \(J^{-1} = \frac{1}{8}
\left[ \begin{array}{c c} 2 & 2 \\ -1 & 1 \end{array}\right]\),
so that \(\bm{v} \cdot \bm{v} = \bm{w} \cdot \bm{w} =
5/64\) and \(\bm{v} \cdot \bm{w} = 3/64\). Since the
\(x\)-component of \(F(s, t)\) can be written as
\(2s - 4t^2 = 2 B_{1, 1}(s) - 4 B_{2, 2}(t)\) and the
\(y\)-component as \(2s + 4t^2 - 2 = 2 B_{1, 1}(s) - 2 B_{0, 2}(t)
- 2 B_{1, 2}(t) + 2 B_{2, 2}(t)\) we have
\begin{alignat}{2}
\mu_1 &= 2 B_{1, 1}(\alpha) &&+ 4 B_{2, 2}(\beta) = 2 \\
\mu_2 &= 2 B_{1, 1}(\alpha) + 2 B_{0, 2}(\beta) +
  2 B_{1, 2}(\beta) &&+ 2 B_{2, 2}(\beta) = 3.
\end{alignat}
Following~\eqref{eq:intersect-cond-num-too}, this gives
\(\kappa_{\alpha, \beta} = \sqrt{202}/8 \approx 1.78\).
This low condition number is expected from a geometric point of view; i.e.
the intersection is a transversal intersection of two lines. However,
when using the resultant to eliminate each parameter, one of the two roots is
a double root:
\begin{align}
\operatorname{Res}_t\left(x_0(s) - x_1(t), y_0(s) - y_1(t)\right) &=
  64(2s - 1)^2 \\
\operatorname{Res}_s\left(x_0(s) - x_1(t), y_0(s) - y_1(t)\right) &=
  4(2t - 1)(2t + 1).
\end{align}
so an algebraic approach may lead to an incorrect conclusion that the
intersection is ill-conditioned.

\subsection{Collapsing to One-dimensional Case}

One key argument for choosing Definition~\ref{defn:abstract-cond-num} over
the Highham condition number \(\kappa_H\) from~\eqref{eq:higham-cond-num} is
that \(\kappa_{\alpha, \beta}\) is a natural extension of the condition number
for the equivalent one-dimensional problem. To see that this is so, we'll
define a ``trivial" example by starting with a polynomial \(p(s)\) in Bernstein
form and a simple root \(\alpha\).

We define the B\'{e}zier curves \(b_0(s) = \left[ \begin{array}{c} p(s) \\ 0
\end{array}\right]\) and \(b_1(t) = \left[ \begin{array}{c} 0 \\ t
\end{array}\right]\). These curves intersect when \(\beta = 0\) and
\(\alpha\) is a root of \(p(s)\).
At such an intersection \(\mu_1 = \widetilde{p}(\alpha) + 0\),
\(\mu_2 = 0 + \beta = 0\) and
\begin{equation}
J^{-1} = \left[ \begin{array}{c c} 1 / p'(\alpha) & 0 \\ 0 & -1 \end{array}\right]
\end{equation}
so that \(\bm{v} \cdot \bm{v} = 1 / \left[p'(\alpha)\right]^2\), \(\bm{v} \cdot
\bm{w} = 0\) and \(\bm{w} \cdot \bm{w} = 1\). This produces
\begin{equation}
  \kappa_{\alpha, 0} = \sqrt{\frac{\mu_1^2 \left(\bm{v} \cdot \bm{v}\right) +
  0 + 0}{\alpha^2 + 0}} = \frac{\widetilde{p}(\alpha)}{\left|\alpha
  p'(\alpha)\right|},
\end{equation}
the commonly used condition number presented
in~\eqref{eq:bernstein-cond-num}.

\subsection{Line-line Intersection with Poorly Behaved Coefficients}

Consider the intersection of the lines \(y = x\) and \(y = 1 - x\) when
\(x \in \left[0, 1\right]\). These correspond to the B\'{e}zier curves
\begin{equation}
b_0(s) = \left[ \begin{array}{c} 1 \\ 1 \end{array}\right] s, \quad
b_1(t) = \left[ \begin{array}{c} 0 \\ 1 \end{array}\right] (1 - t) +
\left[ \begin{array}{c} 1 \\ 0 \end{array}\right] t.
\end{equation}
By adding a scalar \(D > 0\) to each component, we leave \(F(s, t)\) and hence
the solution unchanged. However, the coefficients of the curves change:
\begin{equation}
b_0(s) = \left[ \begin{array}{c} D(1 - s) + (1 + D)s \\ D(1 - s) + (1 + D)s
  \end{array}\right], \quad b_1(t) = \left[ \begin{array}{c}
  D(1 - t) + (1 + D)t \\ (1 + D)(1 - t) + Dt \end{array}\right].
\end{equation}
At the solution \(\alpha = \beta = 1/2\), we have \(\mu_1 = \mu_2 = 2D + 1\)
and
\begin{equation}
J^{-1} = \frac{1}{2}
\left[ \begin{array}{c c} 1 & 1 \\ -1 & 1 \end{array}\right]
\end{equation}
so that \(\bm{v} \cdot \bm{v} = \bm{w} \cdot \bm{w} =
1/2\) and \(\bm{v} \cdot \bm{w} = 0\).
So, we see the condition number \(\kappa_{\alpha, \beta} = \sqrt{2}(2D + 1)\)
increases towards infinity as \(D\) does. This is what we expect as the
coefficients grow so large that their ratio \((1 + D) / D\) approaches \(1\).

\subsection{Family of Lines Approaching Coincidence}

Consider a family of intersections in which one of the lines approaches the
other:
\begin{equation}
b_0(s) = \left[ \begin{array}{c} s \\ 1 \end{array}\right], \quad
b_1(t) = \left[ \begin{array}{c} t \\ (1 + r)(1 - t) + t \end{array}\right].
\end{equation}
These lines \(y = 1\) and \(rx + y = 1 + r\) intersect when
\(\alpha = \beta = 1\). However as \(r \longrightarrow 0^+\), the lines
become coincident: if \(r = 0\) the single intersection becomes infinitely
many.

At the solution, we have \(\mu_1 = \mu_2 = 2\) and
\begin{equation}
J^{-1} = \frac{1}{r}
\left[ \begin{array}{c c} r & 1 \\ 0 & 1 \end{array}\right]
\end{equation}
so that \(\bm{v} \cdot \bm{v} = 1\), \(\bm{v} \cdot \bm{w} =
1/r\) and \(\bm{w} \cdot \bm{w} = 2/r^2\). Again we have a condition number
\begin{equation}
\kappa_{\alpha, \beta} = \sqrt{\frac{4}{r^2} + \frac{4}{r} + 2} =
  \frac{2}{r} + 1 + \frac{r}{4} + \bigO{r^2}.
\end{equation}
that increases towards infinity as the parameter \(r \longrightarrow 0^+\).

\section{Conclusion and Future Work}

The author hopes that this can be useful for evaluating and comparing the
performance of curve intersection implementations. By establishing a
straightforward and easy to compute closed form, the condition number can be
used more often to differentiate between cases where the algorithm or computer
code is at fault for loss in accuracy and cases where the conditioning of the
intersection itself is the cause. The framework set forth in
Section~\ref{sec:generic} can be applied in future work to compute the
condition number of higher order intersections such as surface-surface
intersections in \(\reals^3\).

\section{Acknowledgements}

The author would like to thank Professor Tom Sederberg for his useful
course notes (\cite{SederbergNotes}) that provide a useful introduction to
intersection methods as well as the survey \cite{Sederberg1986}. Additionally
\cite{Farouki1987} served as a great baseline when considering conditioning
of operations on B\'{e}zier curves.

\section*{\refname}
\bibliography{paper}

\end{document}